\documentclass[12pt]{article}

\usepackage[T1]{fontenc}
\usepackage[T2A]{fontenc}
\usepackage[utf8]{inputenc}
\usepackage[russian,english, french]{babel}

\usepackage[mathscr]{eucal}
\usepackage{amsbsy}
\usepackage{amssymb}
\usepackage{amsmath}
\usepackage{amsthm}

\usepackage{latexsym}

\usepackage{stmaryrd}
\usepackage[dvips]{graphicx,epsfig}
\usepackage{color}
\newcommand{\E}{\mathbf{E}\,}

\def\phi{\varphi}
\def\bbn{{\mathbb N}}
\def\bbr{{\mathbb R}}
\def\bbb{{\mathbb B}}

\def\Bl1{{\bf 1}}
\def\B2{{\bf 2}}
\def\B0{{\bf 0}}

\def\di{{\mathrm {dist}}}

\def\e{\varepsilon}

\def\ls{\mathrm {limsup}}
\def\=A8{\"o}

\def\co{\mathop{\hbox{\rm conv}}\nolimits}

\newcommand{\beq}{\begin{equation}}
\newcommand{\eeq}{\end{equation}}
\newcommand\beqn{\begin{displaymath}}  % no number
\newcommand\eeqn{\end{displaymath}}
 
\newcommand{\halmos}{\vspace{3mm} \hfill \mbox{$\Box$}\\[2mm]}
\theoremstyle{plain}
\newtheorem{teo}{Theorem}
\newtheorem{lem}[teo]{Lemma}

\theoremstyle{definition}

\date{}

\begin{document}

{\bf Youri Davydov}
\vspace{15pt}
	
\noindent
	{\Large \bf A counter-example linked to Gaussian convex hulls}
	\vspace{15pt}

% e-mail: youri.davydov@univ-lille.fr   }}

%\vfill
%\eject
\section{Introduction}

%\subsection{Notation}

Consider the sequence  $\{X_n\}$ of  independent centered Gaussian random elements of a separable Banach space $\mathbb{B}$ and their closed convex hulls\\   $W_n = \co \{\,X_{1},\ldots,X_n\},\;n\in \bbn.$

In 1988, under the assumption that  $X_n$  have a common distribution  ${\cal P},$   Goodman \cite{Goodman} obtained a fundamental result on the convergence of the normalized samples
\begin{equation}\label{good}
\frac{1}{b(n)}\{\,X_{1},\ldots,X_n\}, \;\;\; b(t) = \sqrt{2\ln (t)}, \;t>1,
\end{equation}
to the concentration ellipsoid  ${\cal E} $   of the distribution  ${\cal P}$ .

From this result it immediately follows that almost surely, in the \\ Haussdorff metric $d_H$, we have
\begin{equation}
\label{conv1}
\frac{1}{b(n)}W_n {\rightarrow} \;{\cal E}, \quad \ n\to \infty.
\end{equation}
Later this result was extended in different directions:

 - Its analog for random elements from a Skorokhod space was obtained in \cite{Dav}.
 
 - The convergence \eqref{conv1} was proved for stationary weakly dependent Gaussian fields in \cite{DP1}.
 
 - Finally, it was shown in \cite{DP2} that in the weakly dependent case, it was possible to replace the stationarity assumption with an assumption of weak convergence: 
 \begin{equation}\label{conv2}
 X_n  \Rightarrow X.
\end{equation}
 
 In the same work were given examples showing that in the absence of Assumption (\ref{conv2}), the limit set can exist, but its shape can strongly differ from ellipsoidal.  For example, in the case  $\mathbb{B} = \mathbb{R}^d$,       an example of independent Gaussian vectors was given for which  $\frac{1}{b(n)}W_n $
 converged to a given polytope with central symmetry.

The goal of the present note is to show that if the assumption of weak convergence of the initial sequence is relaxed, the limit set can be an arbitrary convex compact set.

\section{The main result}

\begin{teo}\label{1}
Let $V\subset \mathbb{B}$ be a convex compact centrally symmetric set.

Then there exists a sequence of independent Gaussian vectors $\{X_k\}$ such that
$$
\sup_n\{\E|X_n|^2 \}\,<\,\infty,
$$
and, with probability 1,

$$
\frac{1}{b(n)}W_n\;\rightarrow \;V.
$$
\end{teo}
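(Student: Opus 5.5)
Since $V$ is compact, I will choose a countable dense set $\{v_1,v_2,\dots\}\subset V$ with $v_i\neq 0$, and realize each point through a one-dimensional Gaussian, $X_k=\xi_k\,v_{i(k)}$ with $\xi_k$ i.i.d.\ standard normal. Here $i(k)$ is an index sequence in which every $i$ appears with positive asymptotic frequency $p_i>0$, for instance $\sum p_i=1$ and $i(k)$ defined by a deterministic interleaving. Then $\E|X_n|^2=|v_{i(n)}|^2\le \sup_{v\in V}|v|^2<\infty$, which gives the moment condition. Central symmetry of $V$ is essential here, because each $X_k$ is symmetric, so the limit must be symmetric as well.

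\emph{Upper bound.} I need $\limsup d(W_n/b(n),\,\cdot\,)\subset V$. Every point of $W_n/b(n)$ is a convex combination of the points $\xi_k v_{i(k)}/b(n)$, and $\xi_k v_{i(k)}/b(n)\in (|\xi_k|/b(n))\,V$ by symmetry. Since $V$ is convex and symmetric and $0\in V$, it follows that $W_n/b(n)\subset \bigl(\max_{k\le n}|\xi_k|/b(n)\bigr)V$. The classical fact $\max_{k\le n}|\xi_k|/b(n)\to1$ a.s.\ then gives that for any $\e>0$, eventually $W_n/b(n)\subset(1+\e)V$. Hence the excess of $W_n/b(n)$ over $V$ in $d_H$ tends to $0$.

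\emph{Lower bound.} Fix $\e>0$ and choose finitely many $v_1,\dots,v_m$ forming an $\e$-net of $V$; by convexity and symmetry, $\co\{\pm v_1,\dots,\pm v_m\}$ is also $\e$-close to $V$. For a fixed $i$, the subsequence of $\xi_k$ with $i(k)=i$, $k\le n$, has about $p_in$ terms. Its maximum and minimum normalized by $b(p_i n)$ converge a.s.\ to $\pm1$, and $b(p_in)/b(n)\to1$. Hence a.s., eventually $W_n/b(n)$ contains points within $\e|v_i|$ of $\pm v_i$ for all $i\le m$. Therefore $W_n/b(n)$ eventually contains a set $\e$-close to $\co\{\pm v_1,\dots,\pm v_m\}$, which is $\e$-close to $V$. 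Combining this with the upper bound and letting $\e\downarrow0$ along a countable sequence gives $d_H(W_n/b(n),V)\to0$ a.s.

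The main obstacle is justifying the frequency and maximum argument along the subsequences uniformly. I only need it for finitely many $i$ for each $\e$, so it reduces to the standard a.s.\ result $\max_{j\le N}\xi_j/\sqrt{2\ln N}\to1$ applied to each subsequence, together with the frequency condition $\#\{k\le n:i(k)=i\}\sim p_in$. The only subtle point is to build $i(k)$ so that all $p_i>0$ with $\sum p_i=1$. A simple choice is to let $i(k)$ be the $2$-adic valuation of $k$ plus one, which gives $p_i=2^{-i}$. No infinite-dimensionality issues arise because $V$ is compact, so finite nets suffice.
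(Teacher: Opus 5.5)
Your proof is correct. The construction is essentially the paper's: a partition of $\bbn$ into classes of positive density, each class carrying one-dimensional Gaussians $\xi_k v$ along a fixed vector $v\in V$. The verification, however, takes a different route.

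The paper works with support functions. It gets relative compactness of $\{W_n/b(n)\}$ from Borel--Cantelli together with Lemmas 2.2--2.3 of \cite{DP1}. It then proves $M_n(x^\star)\to M_V(x^\star)$ a.s.\ for each fixed $x^\star$, and invokes Lemma 2.7 of \cite{DP1} to pass from pointwise convergence of support functions to Hausdorff convergence.

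You argue directly with sets. Your inclusion $W_n/b(n)\subset\bigl(\max_{k\le n}|\xi_k|/b(n)\bigr)V$ is the set-level version of the paper's bound $M_n(x^\star)\le M_V(x^\star)\max_{k\le n}|\xi_k|/b(n)$. It yields the one-sided Hausdorff estimate uniformly, so the separate relative-compactness step becomes unnecessary. Your finite $\e$-net and $\co\{\pm v_1,\dots,\pm v_m\}$ replace both the pointwise $\liminf$ bound and Lemma 2.7, with the uniformity coming directly from compactness of $V$. So your argument is self-contained and elementary, whereas the paper's slots into the support-function machinery of \cite{DP1}.

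Your choice of atoms is also more robust. The paper uses radial points $a_ks_k$ with $\{s_k\}$ dense in the whole unit sphere, and asserts they are dense in the boundary of $V$. That tacitly needs $0$ to be an interior point of $V$. When $V$ has empty interior, which is always the case for compact $V$ in infinite-dimensional $\mathbb{B}$, a dense set of directions can miss $V$ altogether, so that $\{t>0: ts_k\in V\}=\emptyset$. An arbitrary dense sequence in $V\setminus\{0\}$ requires no radial function.

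Two trivial points: your lower-bound error is $\e\sup_{v\in V}|v|$ rather than $\e$, and the case $V=\{0\}$ (take $X_k=0$) should be set aside first.
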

\vspace{5pt}

{\bf Proof.}
Let   $\{ T_k\}$ be a partition of the set of positive integers, $\bbn=\cup_{k=1}^\infty T_k$,  such that  $T_k$  have asymptotic  density   $p_k>0$ and $\sum p_k = 1$ (it means that $\lim_{n} \{ \frac{1}{n}\#\{T_k\cap[1,n]\}\}=p_k$).

Let  $\{s_k\}$ be a countable dense subset of the unit sphere in $\mathbb{B}$ and
$a_k = \sup\{t>0\,|\,ts_k\in V\}.$ Let $\{\xi_k\}$ be a  sequence of independent standard Gaussian random variables. Let
$X_k= a_k\xi_ks_k.$ It is clear that these random vectors will be independent and that the distribution of $X_k$ will be concentrated on the line
 $\{ts_k, t\in \mathbb{R}\},$      and will be Gaussian with mean $0$  and variance $ \sigma^2_j:=|a_k|^2$.   Let us show that for the sequence defined this way $\frac{1}{b(n)}W_n$ will converge to the desired limit.

First, let us show that the sequence  $\{\frac{1}{b(n)}W_n\}$  will be relatively compact with probability 1.

Lemmas 2.2 and 2.3 from   \cite{DP1}    show that it is sufficient to find a compact $K$  such that, for any $\e >0$, with probability 1 for all large enough $n$, we have:
\beq\label{comp1}
\frac{1}{b(n)}W_n \subset K^{\e}.
\eeq
Take $K=V.$   It is easy to see that the inclusion (\ref{comp1})  will take place if for any  $\e >0$   with probability 1,
\beq\label{comp2}
\ls\{\di(X_n,\; (1+\e)b(n)V)\} = 0,
\eeq
where hereafter  $\di(x, A)$ designates the distance between  $x$ and a set $A.$

For that, it is sufficient to show that for any  $\e >0$,
\beq\label{comp3}
\sum_n\mathbb{P}\{\di(X_n,\; (1+\e)b(n)V) >\e\}  \,<\,\infty.
\eeq
It is clear, that for $n\in T_m$,
\begin{align*}
\mathbb{P}\{\di(X_n,\; (1+\e)b(n)V) >\e\}& \leq\;\mathbb{P}\{\di(X_n,\; (1+\e)b(n)[-a_ms_m,\,a_ms_m]) >\e\}\\
&\leq \mathbb{P}\{|X_n| > (1+\e)b(n)|a_m| \}\\
&= \mathbb{P}\{\frac{|X_n|} {|a_m|}> (1+\e)b(n) \}\\
&= \mathbb{P}\{|\xi| > (1+\e)b(n) \},
\end{align*}
where  $\xi$ is a standard Gaussian random variable and $[x,y]$ designates the segment between the two points $x$ and $y$.

Since  $C(\gamma):= \mathbf E \exp\{\gamma |\xi|^2\} < \infty$    for any $\gamma,\, 0<\gamma< 1/2,$
we have that
\beq\
\mathbb{P}\{|\xi| > (1+\e)b(n) \} \leq \frac{{\mathbf E} \exp{\{\gamma |\xi|^2\}}}
{\exp\{\gamma b(n)^2 (1+\e)^2}\; =\; \frac{C(\gamma)}{n^{2\gamma (1+\e)^2}}.
\eeq
In this way, taking  $\frac{1}{2 (1+\e)^2} <\gamma<1/2,$  we obtain the convergence of the series (\ref{comp3}),
which gives through the Borel-Cantelli lemma relative compactness of the sequence  $\{\frac{1}{b(n)}W_n\}.$

Recall that the support function of a convex set $K$ is a real function $M_K$ defined on the unit sphere $S^\star (0,1)$ of the dual space to the space $\mathbb{B}$,
$$
M_K(x^\star): = \sup_{x\in K}\{\langle x, x^\star\rangle \}.
$$

It is a continuous function that characterizes  $K,$  and moreover,  
$$
d_H(K_1,K_2) = \|M_{K_1} - M_{K_2} \|_{sup} 
$$ 
and
$$
M_{ \co(K_1,K_2)} (x^\star) = \max\{M_{ K_1} (x^\star) \;,\;M_{ K_2} (x^\star)\}.
$$

Now, let us consider the support function of the set $\frac{1}{b(n)}W_n,$ denoting it $M_n.$ We have
$$
M_n(x^\star) = \frac{1}{b(n)}\max_{k\leq n}\{\langle X_k,x^\star\rangle\}.
$$
As the points  $\{a_js_j\}$  form a dense subset of the boundary $V,$   the support function of the set  $V$ is equal to
$$
M_V (x^\star) = \sup_j \{a_j\langle s_j,x^\star\rangle\}.
$$

Since for any   $k \in T_j\,$ we have $ \langle X_k,x^\star\rangle\, =   a_j\langle s_j,x^\star\rangle \xi_k,$
for each $m\in \bbn$  such that $n\geq m$,
\begin{align*}
M_n(x^\star) \;&\geq\; \max_ {j\leq m}\left\{ \frac{1}{b(n)}\max_{k\leq n, j\in \cup_j T_j} \{ \langle X_k,x^\star\rangle\}\right\}\\
&= \max_ {j\leq m}\left\{ a_j\langle s_j,x^\star\rangle\frac{1}{b(n)}\max_{k\leq n, j\in \cup_j T_j}\{\xi_k\}\right\}.
\end{align*}
Since $ \frac{b(np)}{b(n)} \to 1$  for any $p>1,$   by (\ref{conv1})
$\frac{1}{b(n)}\max_{k\leq n, j\in \cup_j T_j}\{\xi_k\}\to 1$ almost surely, and therefore with probability 1, for any $m$,
$$
\liminf_n M_n(x^\star) \geq  \max_ {j\leq m}\{ a_j\langle s_j,x^\star\rangle\},
$$
which gives
\beq\label{supp1}
\liminf_n M_n(x^\star) \geq M_V (x^\star).
\eeq
On the other hand, since for $k\leq n,\;k\in T_j $ we have the inequality
$$
 \langle X_k,x^\star\rangle \, =   a_j\langle s_j,x^\star\rangle \xi_k\; \leq\; M_V (x^\star)
\max_{k\leq n}\{|\xi_k|\},
$$
we have that
$$
M_n(x^\star) \leq M_V (x^\star) \max\left\{
 \frac{1}{b(n)} \max_{k\leq n}\{\xi_k\} \;,\; \frac{1}{b(n)} \max_{k\leq n}\{-\xi_k\} \right\},
$$
and therefore, using once again (\ref{conv1}),  with probability 1,
\beq\label{supp2}
\limsup_nM_n(x^\star) \leq M_V (x^\star) 
\eeq

From (\ref{supp1})   and  (\ref{supp2})   it follows that for any  $x^\star \in S^\star (0,1)$, with probability 1, we have that
$$
M_n(x^\star) \to  M_V (x^\star) .
$$

Now we will use the following fact:

%%%%%%%%%%%%%%%%%%%%%%%%%%%%%%%%%%%%%%%%%%%%%%%%%%%%%%%%%%
\begin{lem}\label{L5} (Lemma 2.7 from \cite{DP1})

Let $(K_n)_{n\geq 0}$ be a sequence of random compact convex subsets of
$\bbb$. Assume that $(B_n)$ is a.s. totally bounded. Assume
also that there exists a (deterministic) function
$\phi:S_1^*(0,1)\to\bbr$ such that, for all $\theta \in
S_1^{\ast}(0,1)$,
\[
\mathcal{M}_{K_n}(\theta)\to \varphi(\theta)\quad  \mbox{a.s., as }
n\rightarrow +\infty.
\]
Then $\varphi$ is the support function of a compact convex
 set $A\subset \bbb$  and
\[
K_n\rightarrow A  \quad  \mbox{a.s., as } n\rightarrow +\infty.
\]
\end{lem}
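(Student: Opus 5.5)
We prove Lemma \ref{L5} in three steps: first show that $\varphi$ is the support function of a compact convex set $A$; then upgrade the pointwise a.s.\ convergence of support functions to uniform convergence on $S^\star(0,1)$, using total boundedness; and finally apply $d_H(K_n,A)=\|\mathcal{M}_{K_n}-\mathcal{M}_A\|_{sup}$. We read the hypothesis as "$(K_n)$ is a.s.\ totally bounded", i.e.\ a.s.\ $\bigcup_n K_n$ is contained in a compact $K$.

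\textbf{Step 1 (the limit is a support function).} Fix $\omega$ in the full-measure event where $\bigcup_n K_n(\omega)\subset K(\omega)$ compact. By total boundedness and the completeness of the hyperspace of nonempty compact convex sets with $d_H$ (a closed subspace of the compact hyperspace of $K$), there is a subsequence $K_{n'}(\omega)\to A(\omega)$ in $d_H$, with $A(\omega)$ compact convex. Along this subsequence $\mathcal{M}_{K_{n'}}\to\mathcal{M}_{A(\omega)}$ uniformly, hence pointwise.

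\textbf{Step 2 (the limit set is deterministic and unique).} The delicate point is that the a.s.\ convergence holds separately for each $\theta$, and the exceptional null set depends on $\theta$. Since $\mathbb{B}$ is separable, there is a countable set $D\subset S^\star(0,1)$ that is norming, and more precisely weak$^\ast$-dense enough that a support function of a compact set is determined by its values on $D$. For instance, one can take $D$ to separate points from compact convex sets, using Hahn--Banach in the separable space with countably many rational functionals.

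Intersecting the countably many full-measure events for $\theta\in D$, we get, on one event of probability one, that $\mathcal{M}_{A(\omega)}(\theta)=\varphi(\theta)$ for all $\theta\in D$. This holds for every subsequential limit $A(\omega)$, so all subsequential limits have the same support function on $D$. Support functions of compact sets are Lipschitz in norm and are continuous with respect to weak$^\ast$ convergence on bounded sets: on the compact $K$, convergence $\theta_j\to\theta$ weak$^\ast$ is uniform. Hence these subsequential limits coincide, equal to some $A(\omega)$, and by compactness of the hyperspace the whole sequence satisfies $K_n(\omega)\to A(\omega)$.

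For arbitrary $\theta$, $\mathcal{M}_{A(\omega)}(\theta)=\lim\mathcal{M}_{K_n(\omega)}(\theta)=\varphi(\theta)$ a.s. Thus for each $\theta$ the random variable $\mathcal{M}_{A(\cdot)}(\theta)$ equals the constant $\varphi(\theta)$ a.s. Since $A$ is determined by its support function on the countable $D$, and there it equals $\varphi$ a.s., the set $A(\omega)$ is a.s.\ equal to a fixed deterministic compact convex set $A$ with $\mathcal{M}_A=\varphi$ everywhere.

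\textbf{Main obstacle.} We expect the obstacle to be the step of choosing $D$ so that values on $D$ determine a compact convex set and every subsequential limit. The weak$^\ast$ separability of the dual ball for separable $\mathbb{B}$ (weak$^\ast$ metrizable and compact, hence separable), combined with uniform weak$^\ast$ continuity of $\mathcal{M}$ on compacts, handles it.
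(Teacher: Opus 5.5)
The paper does not prove this lemma. It only imports it from \cite{DP1}, so there is no in-text argument to compare against. Your proof is correct and follows the natural route. On the a.s.\ event where $\bigcup_n K_n$ lies in a compact set, the hyperspace of compact convex sets is compact. Every subsequential $d_H$-limit has support function $\varphi$ on a countable set $D$ of directions, and a compact convex set is determined by its support function on $D$. Hence the subsequential limit is unique, and it is the same set for every $\omega$ in that event. Finally, $\mathcal{M}_A(\theta)=\varphi(\theta)$ for each $\theta\in S^\star(0,1)$, because the full-measure event attached to $\theta$ meets the good event.

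The one point to tighten is the definition of $D$. Being \emph{norming} is not enough. In $(\mathbb{R}^2,\|\cdot\|_1)$ the four functionals $(\pm1,\pm1)$ form a norming subset of the dual sphere. Yet the unit ball $\{|x_1|+|x_2|\le 1\}$ and the segment $[-e_1,e_1]$ have the same support function on them. Your remark about ``rational functionals'' is also not a construction.

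What works is the tool named in your last paragraph, and it should serve as the definition of $D$. Since $\mathbb{B}$ is separable, the dual unit ball with the weak$^\ast$ topology is compact and metrizable. Hence its subset $S^\star(0,1)$ is weak$^\ast$-separable, and you take $D\subset S^\star(0,1)$ countable and weak$^\ast$-dense in $S^\star(0,1)$. If you prefer $D$ dense in the ball, normalize using $\mathcal{M}(\theta)=\|\theta\|\,\mathcal{M}(\theta/\|\theta\|)$, since the hypothesis only concerns the sphere. For compact $C$, $\mathcal{M}_C$ is weak$^\ast$-continuous on bounded sets. So equality of support functions on $D$ gives equality on $S^\star(0,1)$, and Hahn--Banach then gives equality of the sets.

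Also, the ``upgrade to uniform convergence'' announced in your plan is never carried out. It is not needed: uniqueness of subsequential limits in the compact hyperspace already gives $d_H$-convergence.
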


In conjunction with the relative compactness, we finally obtain
 $$
\frac{1}{b(n)}W_n {\rightarrow} \;{V}.
$$

%%%%%%%%%%%%%%%%%%%%%%%%%%%%%%%%%%%%%%%%%%%%%%%%%%%%%%%%%%%%%

\halmos
\vspace{5pt}

{\bf  Remark.} If the second moments of the norms of the initial vectors  $\{X_k\}$   are uniformly bounded, then by changing slightly the reasoning above and using Fernique's theorem (\cite{F}, Theorem 1.3.3), we obtain relative compactness of the sequence  $\{\frac{1}{b(n)}W_n\}.$ So in that case, the limit set for 
 $\{\frac{1}{b(n)}W_n\}$ can only be a compact set.
 \vspace{10pt}
 
{\bf Acknowledgements.}  

This work was performed at the Saint Petersburg international mathematical Leonard Euler institute and  supported by the Ministry of Science and Higher Education  of the Russian federation  (agreement № 075–15–2025–343).
 \vspace{5pt}

{\bf Affiliation.}

Saint Petersburg State University,  7/9 Universitetskaya Embankment,\\  St. Petersburg, 199034 Russia,

\hspace{10pt} and

Lille university, Cite Scietifique, 59650 Villeneuve-d'Ascq, France.
%\cqfd

%\vspace{5pt}

%\halmos

\section*{References}

\bibliographystyle{plain}
\begin{enumerate}

%\bibitem{Aliprantis} Aliprantis, C. D. and Border, K. C.,
%{ Infinite Dimensional Analysis: A Hitchhiker's Guide }, Springer, 2007

%\bibitem{Berman} Berman, S., A law of large numbers for the maximum in a stationary Gaussian sequence,
% { Ann. Math. Stat.}, 1962, { 33}, 93--97

\bibitem{Dav} Yu. Davydov, {\it On convex hull of {G}aussian samples.} -- { Lith. Math. J.}, {\bf 51}, (2011),  171--179.

\bibitem{DP1} Yu. Davydov and V. Paulauskas, {\it On the asymptotic form of convex hulls of Gaussian random fields.} --
{Cent. Eur. J. Math.},   {\bf 12}, No. 5, (2014), 711--720.

\bibitem{DP2} Yu. Davydov and V. Paulauskas, {\it More on the convergence of Gaussian convex hulls} -- Journal of Mathematical Sciences,
 {\bf 286}, (2024), 684–-691.

\bibitem{Goodman} V. Goodman, {\it Characteristics of normal samples.} -- { Ann.  Probab.},  {\bf 16}, No. 3, (1988), 1281--1290.

\bibitem{F} X. Fernique,  {\it  R\'egularit\'e de processus gaussiens}-- {Invent. Math.,}  {\bf12} (4), (1971),  304–-320.
\end{enumerate}

{\bf  Abstract}

We consider the sequence  of  independent centered Gaussian random elements of a separable Banach space  and their consecutive closed convex hulls. If inicial elements  converge weakly to some limite, then, as shown in Davydov-Paulauskas (2024), its normalized convex hulls converge,  with probability 1, to the concentration  ellipsoid  of the limiting distribution. 

 The goal of the present note  is  to  show  that  if  the assumption  of  weak
convergence of the initial  sequence  is  relaxed, than  the limit  set can  be  an arbitrary
convex compact set.

\end{document}